\documentclass[12pt,a4paper]{amsart}
\usepackage{url}
\usepackage{amsmath,amsthm,amsfonts,amssymb,latexsym}


\textheight=23cm
 \textwidth=13.5cm
 \hoffset=-1cm
 \parindent=16pt

\newtheorem{theorem}{Theorem}[section]
\newtheorem{proposition}[theorem]{Proposition}
\newtheorem{lemma}[theorem]{Lemma}

\theoremstyle{definition}
\newtheorem{definition}[theorem]{Definition}

\newcommand{\U}{\mathcal U}
\newcommand{\w}{\omega}

\newcommand{\IP}{\mathbb P}

\newcommand{\B}{\mathcal{B}}

\newcommand{\F}{\mathcal{F}}

\newcommand{\V}{\mathcal{V}}

\newcommand{\vid}{\hat{\ \ }}
\newcommand{\bigvid}{\hat{\ \ }}

\newcommand{\uhr}{\upharpoonright}

\newcommand{\name}[1]{\dot{#1}}
\newcommand{\la}{\langle}
\newcommand{\ra}{\rangle}

\newcommand{\forces}{\Vdash}

\newcommand{\hot}{\mathfrak}
\newcommand{\W}{\mathcal W}

\newcommand{\nothing}[1]{}

\title[Hurewicz spaces in the Laver model]{Products of Hurewicz spaces in the Laver model}

\author{Du\v{s}an Repov\v{s}  and Lyubomyr Zdomskyy}

\address{Faculty of Education, and Faculty of Mathematics and Physics,
University of Ljubljana,  Ljubljana, Slovenia 1000.}
\email{dusan.repovs@guest.arnes.si}
\urladdr{http://www.fmf.uni-lj.si/\~{}repovs/index.htm}

\address{Kurt G\"odel Research Center for Mathematical Logic,
University of Vienna, W\"ahringer Stra\ss e 25, A-1090 Wien,
Austria.}
\email{lzdomsky@gmail.com}
\urladdr{http://www.logic.univie.ac.at/\~{}lzdomsky/}

\subjclass[2010]{Primary: 03E35, 54D20. Secondary: 54C50, 03E05.}
\keywords{Menger space, Hurewicz space,  semifilter,
 Laver forcing.}

\begin{document}
\maketitle

\begin{abstract}
This article is devoted to the interplay between forcing with fusion
and  combinatorial covering properties. We illustrate this interplay by proving
that in the Laver model
 for the consistency of the Borel's conjecture,
the product of any two metrizable spaces
with the Hurewicz property has the Menger property.
\end{abstract}

\section{Introduction}

A topological space
$X$ has the  \emph{Menger} property (or, alternatively, is a Menger space)
 if for every sequence $\la \U_n : n\in\omega\ra$
of open covers of $X$ there exists a sequence $\la \V_n : n\in\omega \ra$ such that
each $\V_n$ is a finite subfamily of $\U_n$ and the collection $\{\cup \V_n:n\in\omega\}$
is a cover of $X$. This property was introduced by  Hurewicz, and the current name
(the Menger property) is used because Hurewicz
proved in   \cite{Hur25} that for metrizable spaces his property is equivalent to
one  property of a base considered by Menger in \cite{Men24}.
If in the definition above we additionally require that $\{\cup\V_n:n\in\w\}$
is a \emph{$\gamma$-cover} of $X$
(this means that the set $\{n\in\w:x\not\in\cup\V_n\}$ is finite for each $x\in X$),
then we obtain the definition of the \emph{Hurewicz  property}  introduced
in \cite{Hur27}. Each $\sigma$-compact space is obviously a Hurewicz space, and Hurewicz spaces have the
Menger property. Contrary to a conjecture of Hurewicz
the class of  metrizable spaces having the Hurewicz property
 appeared  to be much wider than the class of $\sigma$-compact spaces \cite[Theorem~5.1]{COC2}.
The properties of Menger and Hurewicz are classical examples of
combinatorial covering properties of topological spaces which are
nowadays also called selection principles. This is a
growing area of general topology, see, e.g.,  \cite{Tsa07}. For
instance, Menger and Hurewicz spaces found applications in such
areas as forcing \cite{ChoRepZdo15}, Ramsey theory in algebra
\cite{Tsa??},  combinatorics of discrete subspaces \cite{Aur10}, and
Tukey relations between hyperspaces of compacts \cite{GarMedZdo??}.

 Even before the era of
combinatorial covering properties, there was a lot of activity around the study
of special sets of reals. These studies resolved many classical
questions in general topology and measure theory. As a result,
information about special sets of reals is included in standard
topology textbooks, such as Kuratowski's Topology. The most
influential survey on special sets of reals is, probably, Miller's
chapter \cite{Mil84} in the Handbook of Set-Theoretic Topology. The
most recent monograph on this topic is written by Bukovsky, see
\cite{Buk11}. It complements nicely the classical book \cite{BarJud95} of
Bartoszynski and Judah. This theory still finds interesting applications
in general topology, see, e.g., \cite{HruMil??} for the interplay between
$\lambda$-sets and homogeneity.

The theory of combinatorial covering properties, which originated in
\cite{COC2, Sch96}, can be thought of as a continuation  to that of
special sets of reals, with emphasis on the behaviour of their open
or Borel covers. Some  combinatorial covering properties including
the Menger and Hurewicz ones are about 15 years older than G\"odel's
works on $L$ and  40 years  older than the method of forcing, and
they were introduced in the areas of topology where set-theoretic
methods are quite rare even nowadays. E.g., the original idea behind
the Menger   property, as it is explicitly stated in the first
paragraph of \cite{Men24}, was an application in dimension theory.
However, since at least \cite{Lav76} it has become clear that the
combinatorial covering properties are strongly influenced by
axiomatics and hence can be studied with the  help of forcing, see,
e.g., \cite{BarDow12, ChoHru???, ChoRepZdo15, MilTsa10, SchTal10}
for the more recent works along these lines. There are  equivalences
among statements from disciplines with diverse origins (Ramsey
theory, game theory, function spaces and convergence, topological
groups, dimension theory, covering properties, combinatorial set
theory, forcing, hyperspaces, filters, etc.) with combinatorial
covering properties. Even though not all of these have  found
non-trivial applications  so far  (by translating into the other
fields,  via an equivalence, the results known for combinatorial
covering properties), they are offering an alternative point of view
onto the known properties and thus enhance their understanding.
E.g.,  it is shown in \cite{ChoRepZdo15} that a Mathias forcing
associated to a filter $\F$ on $\w$ does not add dominating reals
iff $\F$ is Menger as a subspace of $2^\w$, thus demonstrating that
this property of filters is topological and in this way answering
some questions for which it was unclear how the ``standard''
approaches in this area  can be used.

One of the basic questions about a topological property is whether
it is preserved by various kinds of products in certain classes of
spaces. As usually, the preservation results may be divided into
\emph{positive}, asserting that properties under consideration are
preserved by products (e.g., the classical Tychonoff theorem), and
\emph{negative} which are typically some constructions of spaces
possessing certain property whose product fails to have it (e.g.,
the folklore fact that the Lindel\"of property is not preserved even
by squares, as witnessed by the Sorgenfrei line).
 In case of combinatorial covering properties we know that  the strongest
 possible  negative result is consistent: Under CH there exist
 $X,Y\subset\mathbb R$ which have the $\gamma$-space property with
 respect to countable Borel covers, whose product $X\times Y$ is not
 Menger, see \cite[Theorem~3.2]{MilTsaZso16}. Thus the product of
 spaces with the strongest combinatorial covering  property considered thus far
 might fail to have even the weakest one. This implies that no
 positive results for combinatorial covering properties can be obtained outright
 in ZFC.
  Unlike the vast majority of
 topological and combinatorial consequences under  CH, the latter
 one does not follow from any equality among cardinal
 characteristics of the continuum, see the discussion on \cite[p. 2882]{MilTsaZso16}.
However, there are many other negative results stating that under
certain equality among cardinal
 characteristics (e.g., $\mathit{cov}(\mathcal N)=\mathit{cof}(\mathcal N)$, $\hot b=\hot
d$, etc.\footnote{We refer the reader to \cite{Bla10} for the
definitions and basic properties of cardinal characteristics of the
continuum which are mentioned but are not used in the proofs in this
article.}) there are spaces $X,Y\subset \mathbb R$ with some combinatorial covering
property such that $X\times Y$ is not Menger, see, e.g.,
\cite{Bab09, RepZdo10, TsaSch02}.

Regarding the positive results,
 until recently the most
unclear situation was with the  Hurewicz  property and the weaker
ones. This was the main motivation for this article.
 There are two reasons why a product of Hurewicz spaces
$X,Y$ can fail to be  Hurewicz/Menger. In the first place, $X\times
Y$ may simply fail to be a Lindel\"of space, i.e., it might have an
open cover $\U$ without countable subcover. Then $X\times Y$ is not
even a Menger space. This may indeed happen: in ZFC there are two
normal spaces $X,Y$ with a covering property much stronger than the
Hurewicz one such that $X\times Y$ does not have the  Lindel\"of
property, see \cite[Section 3]{Tod95}. However, the above situation
becomes impossible if we restrict our attention to metrizable
spaces. This second case, on which we  concentrate in the sequel,
 turned out to be  sensitive to
the ambient set-theoretic universe: under CH there exists a Hurewicz space
whose square is not Menger, see \cite[Theorem~2.12]{COC2}. The above result has been achieved by
a transfinite construction of length $\w_1$, using the combinatorics of the ideal of
 measure zero  subsets of reals. This combinatorics turned out
 \cite[Theorem~43]{TsaSch02} to require
 much weaker set-theoretic assumptions  than CH.
 In particular, under the Martin
Axiom there are Hurewicz subspaces of the irrationals whose product is not Menger.

The following theorem, which is the main result of this article, shows
that an additional assumption in the results from
\cite{COC2,TsaSch02} mentioned above is really needed. In addition,
it implies that the affirmative answer to \cite[Problem~2]{COC2} is
consistent, see \cite[Section 2]{Tsa07}  for the discussion of this
problem.

\begin{theorem} \label{main}
In the Laver model for the consistency of the Borel's conjecture,
the product of any two Hurewicz spaces
has the Menger property provided that it is a Lindel\"of space.
In particular, the product of any
two Hurewicz metrizable spaces has the Menger property.
\end{theorem}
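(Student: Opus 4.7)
The plan is to work inside the Laver model $V^{\IP_{\w_2}}$, where $\IP_{\w_2}$ denotes the countable support iteration of Laver forcing of length $\w_2$ over a ground model satisfying CH, and to combine the Hurewicz property of each factor with the fusion structure of Laver forcing. The natural starting point is the $\w^\w$-combinatorial reformulation of the two properties: a Lindel\"of space $Z$ is Menger iff every continuous image $\vp(Z)\sbst \w^\w$ fails to be dominating, while $Z$ is Hurewicz iff every continuous image in $\w^\w$ is bounded in $(\w^\w,\leq^*)$. Hence, given Hurewicz spaces $X, Y$ with $X \times Y$ Lindel\"of and a continuous map $\Phi : X \times Y \to \w^\w$, the task is to produce some $g \in \w^\w$ not dominated by any $\Phi(x,y)$.

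Second, I would invoke properness and reflection. Since $\IP_{\w_2}$ is a countable support iteration of proper forcings, any specific witnessing datum $(X, Y, \Phi)$ --- suitably coded, treating $X$ and $Y$ as continuous images of Polish spaces of codes --- appears at some intermediate stage $\alpha < \w_2$, and the tail forcing $\IP_{\w_2}/\IP_\alpha$ is again (equivalent to) a countable support Laver iteration. The problem then reduces to showing that, after such a tail iteration is performed over a model where $X, Y$ are Hurewicz, the image $\Phi(X\times Y)\sbst \w^\w$ computed in the extension fails to be dominating.

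The heart of the argument is a fusion construction on a Laver condition $T$ carrying a name for $\Phi$. At fusion stage $n$, one trims $T$ at each of its level-$n$ nodes; to each such node $t$ one associates, using the Hurewicz property of $X$ applied to a continuous image extracted from $T$ and the name of $\Phi$, a bound $f^X_t \in \w^\w$ controlling the $X$-projection of the relevant values of $\Phi$, and symmetrically $f^Y_t$ using that $Y$ is Hurewicz. Diagonalising these finitely many bounds along the Laver fusion, and exploiting the countable branching of Laver trees below the stem, one would produce in the generic extension a single $g \in \w^\w$ with $g \not\leq^* \Phi(x,y)$ for every $(x,y)\in X\times Y$.

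The main obstacle, as I see it, is precisely this diagonalisation: the Hurewicz property bounds the $X$- and $Y$-projections of images separately, but to prevent $\Phi(x,y)$ from dominating $g$ one must blend both bounds simultaneously at every level of the fusion. The negative ZFC results cited in the introduction (valid under CH or under mild cardinal equalities such as $\bb=\dd$) show that this blending is genuinely impossible outside a very specific forcing-theoretic setting, so the argument must truly exploit the Laver tree combinatorics --- a single stem followed by $\w$-branching at every subsequent node --- rather than any soft property of proper forcing or of the cardinal invariants of the Laver model alone.
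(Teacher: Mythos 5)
There is a genuine gap, and in fact two. First, your reflection step does not go through as stated: the triple $(X,Y,\Phi)$ cannot in general ``appear at some intermediate stage $\alpha<\omega_2$,'' because $X$ and $Y$ are arbitrary Hurewicz sets of reals of size up to $\mathfrak c=\omega_2$ and are not elements of (nor coded in) any $V[G_\alpha]$. Only countable or $F_\sigma$-coded approximations of them reflect, and the whole difficulty of the theorem is to extract enough structure from such approximations. Second, the step you yourself flag as the ``main obstacle'' --- blending the $X$-side and $Y$-side bounds inside a single fusion on one Laver condition --- is exactly the step you do not carry out, and the paper never performs such a symmetric two-sided fusion at all. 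Its argument is deliberately asymmetric: a forcing/fusion lemma (using Laver's Lemma~14 on a name for a \emph{single} point of $X$) shows that in the Laver model every Hurewicz subspace of $\mathcal P(\omega)$ is \emph{weakly concentrated}, i.e., covered by $\omega_1$ many of the prescribed $G_\delta$-sets $R(Q)$ with $Q$ countable; the contradiction there comes from the boundedness of a continuous image of an $F_\sigma$-set coded in an intermediate model, not from a dominating-real analysis of $\Phi$ on the product. The second factor $Y$ is then handled purely topologically (a compactness argument shows a $G_\delta$-set around $Q\times Y$ can be fattened to $R(Q)\times Y$), and the final blending is done not by fusion but by the cardinal inequality $\mathfrak b>\omega_1$, which bounds the $\omega_1$ many finite-selection functions arising from the $R(Q)$'s. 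Your sketch offers no substitute for either the concentration lemma or the $\mathfrak b>\omega_1$ step, so the core of the proof is missing.

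A smaller but real issue: the first clause of the theorem concerns arbitrary Lindel\"of products, where the characterizations ``Menger iff no continuous image in $\omega^\omega$ is dominating'' and ``Hurewicz iff every continuous image in $\omega^\omega$ is bounded'' are not available (they are theorems about zero-dimensional separable metrizable spaces). The paper bridges this by the semifilter characterization of Menger/Hurewicz from \cite{Zdo05}, reducing the general case to subspaces of $\mathcal P(\omega)$; your proposal has no analogue of this reduction.
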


This theorem seems to be the first ``positive'' consistency result
related to the preservation by products
of combinatorial covering properties weaker than the $\sigma$-compactness,
in which  no further restrictions\footnote{The requirement
that the product must be Lindel\"of is vacuous for  metrizable spaces.
Let us note that nowadays the study of combinatorial covering properties
concentrates mainly on sets of reals.}
 on the spaces
 are assumed. The proof is based on the analysis of continuous maps
and names for reals in the  model of set theory constructed in \cite{Lav76}.
The question whether the product of Hurewicz metrizable spaces is a Hurewicz
space  in this model
remains open. It is worth mentioning here that in the Cohen model
there are Hurewicz subsets of $\mathbb R$ whose product has the  Menger property but
fails to have the  Hurewicz one,
see \cite[Theorem~6.6]{MilTsaZso16}.

 As suggested in its formulation, the model we use in
Theorem~\ref{main} was invented by Laver in order to prove that
Borel's conjecture is consistent, the latter being the  statement
that every strong measure zero set is countable. A strong measure
zero set is a subset $A$ of the real line with the following
property:
  for every sequence $\la \varepsilon_n:n\in\w\ra$ of positive reals there exists a sequence
$\la I_n:n\in\w\ra$ of intervals such that $|I_n| < \varepsilon_n$
for all $n$ and $A$ is covered by  the $I_n$'s. Here $|I_n|$ denotes
the length of $I_n$. Obviously, every countable set is a strong
measure zero set, and so is every union of countably many strong
measure zero sets. Sierpi\'nski proved in \cite{Sie28}  that  CH
implies the existence of uncountable strong measure zero sets, i.e.,
the negation of Borel's conjecture. Combined with this  Laver's
result gave the independence of Borel's conjecture. This outstanding
result was the first\footnote{According to our colleagues who worked
in set theory already in the 70s.} instance when a forcing, adding a
real, was iterated with countable supports without collapsing
cardinals. This work of Laver can be thought of as one of the
motivations behind   Baumgartner's axiom $A$ and later Shelah's
theory
 of proper forcing.

The conclusion of Theorem~\ref{main} does not follow from
 Borel's conjecture:
If we  add  $\w_2$ many random reals  over the Laver model then
Borel's conjecture still holds by \cite[Section 8.3.B]{BarJud95}  and  we have
$\mathit{cov}(\mathcal N)=\mathit{cof}(\mathcal N)$,
and hence in this model
there exists  a Hurewicz set of reals  whose square is not Menger, see \cite{TsaSch02}.
Thus  Borel's conjecture is consistent with the  existence of  a Hurewicz set of reals with nonMenger square.

Theorem~\ref{main} seems to be an instance of a more general
phenomena, namely that proper posets with fusion affect the behavior
of combinatorial covering properties. This happens because sets of reals with
certain combinatorial covering properties are forced  to have a rather clear
structure, which   suffices  to prove  positive preservation
results. For instance, the core of the proof of Theorem~\ref{main}
is that Hurewicz subspaces  of the real line are concentrated in a
sense around their ``simpler'' subspaces in the Laver model, see
Lemma~\ref{laver}.  As a consequence of corresponding structural
results we have proved \cite{Zdo??} that the Menger property is
preserved by finite products in the Miller model constructed in
\cite{Mil83}, and there are only $\hot c$ many Menger subspaces of
$\mathbb R$ in the Sacks model constructed in \cite{Sac??}, see
\cite{GarMedZdo??}.

 We believe that the interplay between forcing with fusion and
combinatorial covering properties has many more instances and it is worth
 considering whether there is some  deep reason behind it.

We assume that the reader is familiar with the basics of forcing as
well as with  standard proper posets used in the set theory of reals.

\section{Proof of Theorem~\ref{main}.  }

We shall first introduce a notion crucial for the proof of Theorem~\ref{main}.

\begin{definition}
 A topological space $X$ is called \emph{weakly concentrated}
if for every collection $\mathsf Q\subset [X]^\w$ which is cofinal with respect
to inclusion, and for every function $R:\mathsf Q\to\mathcal P(X)$
assigning to each $Q\in\mathsf Q$ a $G_\delta$-set $R(Q)$ containing $Q$,
there exists $\mathsf Q_1\in [\mathsf Q]^{\w_1}$ such that $X\subset\bigcup_{Q\in\mathsf Q_1}R(Q)$.
\end{definition}

The topology in $\mathcal P(\w)$  is generated by the countable base
$\B=\{[s,n]:s\in [\w]^{<\w}, n\in\w\}$, where $[s,n]=\{x\subset
\w:x\cap n=s\}$. Thus any open subset $O$ of $\mathcal P(\w)$ may be
identified with  $B_O=\{\la s,n\ra\in [\w]^{<\w}\times\w: [s,n]\subset
O\}$, and vice versa, any $B\subset [\w]^{<\w}\times\w$ gives rise
to an open $O_B=\bigcup_{\la s,n\ra\in B}[s,n]$. Note that $B\subset
B_{O_B}$ for all $B$.
 By a \emph{code} for an $F_\sigma$ subset $F$
of $\mathcal P(\w)$ we mean a sequence $\vec{B}=\la B_n:n\in\w\ra$
of subsets of $[\w]^{<\w}\times\w$ such that $F=\mathcal
P(\w)\setminus \bigcap_{n\in\w} O_{B_n}$. Obviously each $F_\sigma$
set $F\subset \mathcal P(\w)$ has many codes in the sense of the
definition above. For models $V\subsetneq V'$ of ZFC and   an
$F_\sigma$-subset $F\in V'$ of $\mathcal P(\w)$ we say that $F$
\emph{is coded in $V$} if there exists a  code for $F$ which is an
element of $V$. Note that being coded in $V$ doesn't imply being a
subset of $V$: $\mathcal P(\w)$ has codes in $V$ (e.g., $\la
\la\emptyset,0\ra:n\in\w\ra$) but it is not a subset of $V$ as long as
there are new reals in $V'$.

The consideration above also applies  to other Polish spaces having a base which can
be identified with some ``simple'' (e.g., constructive would suffice for our purposes)
 subset of $H(\w)$, the family of all hereditarily finite sets. Among them are $\w^\w, \mathcal P(\w)\times\w^\w$,
etc. In particular, since every continuous function from an $F_\sigma$-subset $F$ of $\mathcal P(\w)$
to $\w^\w$ is an $F_\sigma$-subset of $\mathcal P(\w)\times\w^\w$, we may speak about
such functions coded in $V$.

For a subset $X\in V'$ of $\mathcal P(\w)$ and an $F_\sigma$-subset
$Y$ of $X$ we shall say that \emph{$Y$ is coded in $V$} if there
exists an $F_\sigma$-subset $F$ of $\mathcal P(\w)$ coded in $V$
such that $Y=X\cap F$. Similarly, for continuous functions:
$f:Y\to\w^\w$ is coded in $V$ if there exists an $F_\sigma$-subset
$F$ of $\mathcal P(\w)$  such that $Y=X\cap F$, and a continuous
$\tilde{f}:F\to\w^\w$ coded in $V$, such that $f=\tilde{f}\uhr Y$.

The following lemma is the key part of the proof of
Theorem~\ref{main}. Its proof is reminiscent of that of
\cite[Theorem~3.2]{MilTsa10}.
 We will use the notation from \cite{Lav76} with  only differences being  that
smaller conditions in a forcing poset  are supposed to
 carry more information about the generic filter, and the ground model is denoted by $V$.

A subset $C$ of $\w_2$ is called an \emph{$\w_1$-club} if it is
unbounded and for every $\alpha\in\w_2$ of cofinality $\w_1$, if
$C\cap\alpha$ is cofinal in $\alpha$ then $\alpha\in C$.

\begin{lemma} \label{laver}
 In the Laver model every Hurewicz subspace of $\mathcal P(\w)$ is weakly concentrated.
\end{lemma}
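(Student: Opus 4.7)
The plan is to combine a reflection argument along the countable-support Laver iteration $\mathbb{P}_{\omega_2}$ over $V\models \mathrm{CH}$ with the combinatorial content of the Hurewicz property, following the template of \cite[Theorem~3.2]{MilTsa10}.

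First, I would stratify all the data along the iteration. Since each initial segment $\mathbb{P}_\alpha$ ($\alpha<\omega_2$) has cardinality $\aleph_1$ (countable supports plus CH in $V$), every real in the Laver extension lies in some intermediate model $V[G_\alpha]$, and $|V[G_\alpha]\cap\mathcal{P}(\omega)|\leq\aleph_1$. Hence $X=\bigcup_{\alpha<\omega_2}X_\alpha$ with $X_\alpha:=X\cap V[G_\alpha]$ of size $\leq\aleph_1$, and the data $\mathsf{Q}, R$ stratify analogously. By a standard book-keeping one extracts an $\omega_1$-club $C\subseteq\omega_2$ of stages $\alpha$ of cofinality $\omega_1$ at which the family $\mathsf{Q}_\alpha:=\{Q\in\mathsf{Q}\cap V[G_\alpha]:R(Q)\text{ is coded in }V[G_\alpha]\}$ is cofinal in $[X_\alpha]^\omega$ and reflects $\mathsf{Q}$ faithfully.

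Next, at each $\alpha\in C$ I would exploit the Hurewicz property. Fixing an enumeration of $\mathsf{Q}_\alpha$ inside $V[G_\alpha]$, the codes of the corresponding $G_\delta$-sets $R(Q)$ assemble into a continuous $\Phi_\alpha:X\to\omega^\omega$ whose coordinates measure the ``depth'' at which $x$ sits inside each $R(Q)$, so that membership in $R(Q)$ translates into a coordinate-wise bound. The Hurewicz property of $X$, in its standard $\omega^\omega$-boundedness formulation, forces $\Phi_\alpha(X)$ to be dominated modulo finite by some $g_\alpha\in\omega^\omega$, and decoding $g_\alpha$ yields a countable subfamily of $\mathsf{Q}_\alpha$ whose $G_\delta$-sets collectively $\gamma$-cover $X_\alpha$.

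The delicate third step, which I expect to be the main obstacle, is to compress the $\omega_2$-many stage-wise choices into a single $\aleph_1$-sized $\mathsf{Q}_1$. The Laver-specific input is that names for reals in a countable-support Laver iteration are, by Laver's original argument for Borel's conjecture using fusion, controllable at a bounded initial stage; combined with the bound supplied by the Hurewicz property, this should let one show that the family $\{g_\alpha:\alpha\in C\}$ is itself concentrated on some $V[G_{\alpha^*}]$ after passing to an $\omega_1$-cofinal subset of $C$. Tracking the decoding backwards then yields the desired $\mathsf{Q}_1\in[\mathsf{Q}]^{\omega_1}$ with $X\subseteq\bigcup_{Q\in\mathsf{Q}_1}R(Q)$. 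The fine coordination of fusion, the Laver property, and the Hurewicz characterization of $\omega^\omega$-boundedness is where I expect the real technical content to lie, and where one must follow the specifics of Laver forcing rather than rely on generalities.
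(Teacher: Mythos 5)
Your first step (stratifying $X$, $\mathsf Q$ and $R$ along the iteration and reflecting to an $\w_1$-club of stages where CH gives $|\mathsf Q\cap V[G_\alpha]|\leq\w_1$ and $\mathsf Q\cap V[G_\alpha]$ is cofinal in $[X\cap V[G_\alpha]]^\w$) matches the paper. But the remainder has a genuine gap, and the architecture is wrong in a way that cannot be repaired as stated. Your second step proves something that needs no Hurewicz input at all: since $\mathsf Q_\alpha$ is cofinal in $[X_\alpha]^\w$, every point of $X_\alpha$ lies in some $Q\in\mathsf Q_\alpha$ and hence in $R(Q)$, so $\{R(Q):Q\in\mathsf Q_\alpha\}$ trivially covers $X_\alpha$. (Moreover, your map $\Phi_\alpha$ measuring the ``depth'' of $x$ in $R(Q)$ is not a well-defined continuous map into $\w^\w$ for $x\notin R(Q)$.) Your third step then tries to assemble an $\w_1$-sized family from $\w_1$-many stages; but any $\w_1$-sized subset of $C$ is bounded in $\w_2$, so the union of the corresponding $X_\alpha$'s omits every point of $X$ that first appears at a later stage, and nothing in your plan addresses those points. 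The vague appeal to fusion and the Laver property in step three does not supply the missing argument.

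What the paper actually does is show that a \emph{single} stage $\alpha\in C\cap D$ already works: $X\sbst\bigcup\{R(Q):Q\in\mathsf Q\cap V[G_\alpha]\}$, where the right-hand side has only $\w_1$ many terms by CH at stage $\alpha$. This is proved by a forcing contradiction, and this is where the Hurewicz property and the Laver machinery genuinely enter. Given a condition $p$ and a name $\name{x}$ for a point of $X$ avoiding the union, \cite[Lemma~14]{Lav76} produces $p'\leq p$ and finite sets $U_s$ of reals, indexed by the nodes $s$ of $p'(0)$ above the stem, such that for every $\e>0$ and all but finitely many immediate successors $t$ of $s$, the condition $p'(0)_t\bigvid p'\uhr[1,\w_2)$ forces $\name{x}$ to be $\e$-close to some $u\in U_s$. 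One then picks $Q\in\mathsf Q\cap V[G_\alpha]$ containing $X\cap\bigcup_s U_s$, so that $F=X\sm R(Q)$ is an $F_\sigma$-subspace of $X$ coded in $V[G_\alpha]$ which is forced to contain $\name{x}$, and considers the continuous map $f(y)(s)=[1/\min\{|y-u|:u\in U_s\}]+1$ on $F$. The Hurewicz property, in the form ``continuous images in $\w^\w$ of $F_\sigma$-subspaces of $X$ coded at stage $\alpha$ are $\leq^*$-bounded by a real at stage $\alpha$'' (this is what the club $C$ is set up to guarantee), yields a bound $b$ on $f[F]$ lying in the ground model, and forcing $f(\name{x})(s'')\leq b(s'')$ at a suitable node $s''$ contradicts the arbitrarily good approximation of $\name{x}$ by $U_{s''}$. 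This interplay between the finite approximating sets $U_s$, the choice of $Q$ covering them, and the boundedness of the distance function is the core of the proof, and it is absent from your proposal.
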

\begin{proof}
We work in $V[G_{\w_2}]$, where $G_{\w_2}$ is $\IP_{\w_2}$-generic
and $\IP_{\w_2}$ is the iteration of length $\w_2$ with countable supports
of the Laver forcing, see \cite{Lav76} for details.

It is well known that a space $X\subset\mathcal P(\w)$ is Hurewicz if and only if
$f[X]$ is bounded with respect to $\leq^*$ for every continuous $f:X\to\w^\w$, see \cite[Theorem~4.4]{COC2}
or \cite{Hur27}.
 Let us fix a Hurewicz space $X\subset\mathcal P(\w)$.
The Hurewicz property is preserved by $F_\sigma$-subspaces
because it is obviously preserved by closed subspaces and countable unions.
Therefore
there exists an $\w_1$-club $C\subset \w_2$ such that for every $\alpha\in C$ and
continuous $f:F\to \w^\w$ coded in $V[G_\alpha]$, where $F$
is an $F_\sigma$-subspace  of $X$
coded in $V[G_\alpha]$,
there exists $b\in \w^\w\cap V[G_\alpha]$ such that $f(x)\leq^*b$
for all $x\in F$.
Indeed, since for every $\alpha<\w_2$ CH holds in $V[G_\alpha]$, there are at most $\w_1$ many
pairs $\la F,f\ra$ such that $F$
is an $F_\sigma$-subspace  of $X$
coded in $V[G_\alpha]$ and
 $f:F\to \w^\w$ is a continuous function  coded in $V[G_\alpha]$.
For every such pair find $\gamma_{F,f}<\w_2$ and $b_{F,f}\in\w^\w\cap V[G_{\gamma_{F,f}}]$
such that $f(x)\leq^*b_{F,f}$ for all $x\in F$. Let  $\gamma(\alpha)$  be the supremum of all
the $\gamma_{F,f}$ for $F,f$  as above. It is clear that the
$\w_1$-club $C\in V[G_{\w_2}]$ of all $\alpha$ such that $\gamma(\beta)<\alpha$ for all
$\beta<\alpha$ is as required.

Let $\mathsf Q\subset [X]^\w$ be cofinal with respect to the
inclusion. Fix a function $R:\mathsf Q\to\mathcal P(X)$ assigning to
each $Q\in\mathsf Q$ a $G_\delta$-subset $R(Q)$ of $\mathcal P(\w)$
containing $Q$. By a standard argument (see, e.g., the proof of
\cite[Lemma~5.10]{BlaShe87})
 there exists an $\w_1$-club $D\subset\w_2$
such that $\mathsf Q\cap V[G_\alpha]\in V[G_\alpha]$ and
$R\uhr (\mathsf Q\cap V[G_\alpha])\in V[G_\alpha]$ for\footnote{Here
 by $R$ we mean  the map
which assigns to a $Q\in \mathsf Q$ some code of $\mathcal P(\w)\setminus R(Q)$.}
all $\alpha\in D$. Moreover, using CH in the intermediate models as in the previous paragraph,
 we may also assume that  for every $Q_0\in [X\cap V[G_\alpha]]^\w\cap V[G_\alpha]$
there exists $Q\in \mathsf Q\cap V[G_\alpha]$ such that $Q_0\subset Q$.

Let us fix $\alpha\in C\cap D$.
We claim that $X\subset W$, where $W=\bigcup\{R(Q):Q\in\mathsf Q\cap V[G_\alpha]\}$.
Suppose that, contrary to our claim, there exists $p\in G_{\w_2}$ and a $\IP_{\w_2}$-name $\name{x}$
such that $p\forces\name{x}\in\name{X}\setminus\name{W}$.
By \cite[Lemma~11]{Lav76}
there is no loss of generality
in assuming that $\alpha=0$. Applying \cite[Lemma~14]{Lav76}
to a sequence $\la \name{a}_i:i\in\w\ra$ such that $\name{a}_i=\name{x}$ for all $i\in\w$,
we get a condition $p'\leq p$ such that $p'(0)\leq^0 p(0)$, and a finite set $U_s$
of reals for every $s\in p'(0)$ with $p'(0)\la 0\ra\leq s$, such that for each $\varepsilon>0$,
  $s\in p'(0)$ with $p'(0)\la 0\ra\leq s$, and for all but finitely many
immediate successors $t$ of $s$ in $p'(0)$ we have
$$  p'(0)_t\bigvid p'\uhr[1,\w_2)\forces \exists u\in U_s\: (|\name{x}-u|<\varepsilon). $$
Fix $Q\in\mathsf Q\cap V$ containing
$X\cap\bigcup\{U_s:s\in p'(0), s\geq p'(0)\la 0\ra\}$ and set $F=X\setminus R(Q)$.
Note that $F$ is an $F_\sigma$-subset of $X$ coded in $V$.
It follows that $p'\forces \name{x} \in \name{F}$ because $p'$ is stronger than $p$
that forces $\name{x}\not\in \name{W}\supset \name{X}\setminus\name{F}$. Consider the map
$f:F\to\w^S$, where $S=\{s\in p'(0): s\geq p'(0)\la 0\ra\}$, defined as follows:
$$f(y)(s)=[1/\min\{|y-u|:u\in U_s\}]+1$$
 for\footnote{Here $[a]$ is the largest integer not exceeding $a$.} all $s\in S$ and $y\in F$.
Since $F$ is disjoint from $Q$ which contains all the $U_s$'s, $f$
is well defined.
Since both $F$ and $f$ are coded in $V$, there exists $b\in\w^S\cap V$
such that $f(y)\leq^* b$ for all $y\in F$.

It follows from $p'\forces \name{x} \in \name{F}$ that $p'\forces \name{f}(\name{x}) \leq^* b$,
and  hence there exists $p''\leq p$ and a finite subset $S_0$ of $S$ such that
$p''\Vdash \name{f}(\name{x})(s) \leq b(s)$ for all $S\setminus S_0$.
By replacing $p''$ with  $p''(0)_s\vid p''\uhr[1,\w_2)$
for some $s\in p''(0)$, if necessary, we may additionally assume that $p''(0)\la 0\ra\in S\setminus S_0$.
Letting $s''=p''(0)\la 0\ra$, we conclude from the above that
 $p''\Vdash \name{f}(\name{x})(s'')\leq b(s'')$, which means that
$$ p''\Vdash     \min\{|\name{x}-u|:u\in U_{s''}\} \geq 1/b(s'').      $$
On the other hand, by our choice of $p'$ and $p''\leq p'$
we get that
for all but finitely many
immediate successors $t$ of $s''$ in $p''(0)$ we have
$$  p''(0)_t\bigvid p''\uhr[1,\w_2)\forces \exists u\in U_{s''}\: |\name{x}-u|<1/b(s'') $$
which means
$ p''(0)_t\bigvid p''\uhr[1,\w_2)\forces \min\{|\name{x}-u|:u\in U_{s''}\} <1/b(s'') $
and thus leads to a contradiction.
\end{proof}

A subset $X$ of $\mathcal P(\w)$ is called a \emph{$\lambda$-set} if
any  $A\in [X]^\w$ is a $G_\delta$-subset of $X$. Obviously, every
weakly concentrated $\lambda$-set has size $\leq\w_1$. Therefore
Lemma~\ref{laver} implies \cite[Theorem~3.2]{MilTsa10} because  the
property of a subset of $\mathcal P(\w)$ considered in the latter
theorem
 easily implies being both Hurewicz and a $\lambda$-set,
see, e.g., the proof of \cite[Theorem~5]{TsaZdo12} for details.

The next lemma can probably be considered as  folklore. We present
its proof for the sake of completeness.

\begin{lemma} \label{covering_g_delta}
Let $Y\subset \mathcal P(\w)$ be  Hurewicz  and $Q\subset\mathcal P(\w)$
 countable. Then for every $G_\delta$-subset $O$ of $\mathcal P(\w)^2$ containing
$Q\times Y$ there exists  a $G_\delta$-subset $R\supset Q$ such that
$R\times Y\subset O$.
\end{lemma}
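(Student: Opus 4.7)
The plan is to write $O$ as a decreasing countable intersection of open sets and apply the Hurewicz property of $Y$ to a doubly-indexed family of open covers derived from $Q$, so as to produce a single $G_\delta$-neighbourhood of $Q$ that is contained in $\bigcap_n O_n$ cross $Y$.

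To set things up, I would fix open sets $O_n\subset\mathcal{P}(\w)^2$ with $O=\bigcap_n O_n$ and, after replacing $O_n$ with $\bigcap_{k\leq n}O_k$, arrange that $O_0\supset O_1\supset\cdots$; this monotonicity is what will let me trade ``all but finitely many'' for ``all'' at the end. Enumerate $Q=\{q_k:k\in\w\}$, and for each triple $(k,n,y)\in\w\times\w\times Y$ use openness of $O_n$ at $(q_k,y)$ to pick basic open neighbourhoods $A^{k,y}_n\ni q_k$ and $B^{k,y}_n\ni y$ in $\mathcal{P}(\w)$ with $A^{k,y}_n\times B^{k,y}_n\subset O_n$. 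For each pair $(k,n)$ the collection $\mathcal{V}^k_n:=\{B^{k,y}_n:y\in Y\}$ is an open cover of $Y$.

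I would then fix an enumeration $(k_m,n_m)_{m\in\w}$ of $\w\times\w$ in which every pair appears and apply the Hurewicz property of $Y$ to the sequence $(\mathcal{V}^{k_m}_{n_m})_{m\in\w}$, obtaining finite $F_m\subset Y$ such that the open sets $G_m:=\bigcup_{y\in F_m}B^{k_m,y}_{n_m}$ form a $\gamma$-cover of $Y$. Put $A_m:=\bigcap_{y\in F_m}A^{k_m,y}_{n_m}$, which is open, contains $q_{k_m}$, and satisfies $A_m\times G_m\subset O_{n_m}$. Define $U_n:=\bigcup\{A_m:n_m=n\}$ and $R:=\bigcap_n U_n$. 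Since every pair $(k,n)$ occurs as some $(k_m,n_m)$, the corresponding $A_m$ contains $q_k$ and lies in $U_n$, so $Q\subset U_n$ for every $n$ and hence $Q\subset R$, which is manifestly a $G_\delta$-set.

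The remaining task is to verify $R\times Y\subset O$. Given $x\in R$ and $y\in Y$, the $\gamma$-cover property provides a finite $M(y)$ with $y\in G_m$ whenever $m>M(y)$; set $N(y):=\max\{n_m:m\leq M(y)\}+1$. For each $n\geq N(y)$, since $x\in U_n$ I pick $m$ with $n_m=n$ and $x\in A_m$, and any such $m$ must exceed $M(y)$, so $y\in G_m$ and $(x,y)\in A_m\times G_m\subset O_n$. Using that the $O_n$'s are decreasing, I conclude $(x,y)\in\bigcap_n O_n=O$. The main obstacle, and the reason for handling the $q_k$'s simultaneously rather than building a $G_\delta$ around each separately, is that a countable union of $G_\delta$-sets is only $F_{\sigma\delta}$; the double-indexed Hurewicz application is what lets me package all of $Q$ into a single $G_\delta$.
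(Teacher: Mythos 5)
Your proof is correct, and it takes a genuinely different route from the paper's. The paper first reduces to the case where $O$ is open (one can apply that case to each open set in a $G_\delta$ representation and intersect the resulting $R$'s), then covers $Y$ inside each section $O_n=\{z:\la q_n,z\ra\in O\}$ by \emph{clopen} sets, interleaves these countably many covers so that each appears infinitely often, and applies the Hurewicz property once to write $Y\subset\bigcup_k Z_k$ with each $Z_k=\bigcap_{m\geq k}\cup\V_m$ compact and contained in every $O_n$; the tube lemma then produces open $R_k\supset Q$ with $R_k\times Z_k\subset O$, and $R=\bigcap_k R_k$. You instead keep $O$ as a decreasing intersection of open sets, choose basic open rectangles at every point of $Q\times Y$, and carry the first coordinates $A_m$ of those rectangles along with the $\gamma$-cover of $Y$, so you never invoke compactness of $\mathcal P(\w)$ --- the clopen covers and the tube lemma are precisely where the paper uses it. Your version is thus slightly more general (it works for a countable $Q$ in an arbitrary space and an arbitrary Hurewicz $Y$), at the cost of the explicit bookkeeping with $M(y)$ and $N(y)$ that compactness of the $Z_k$'s handles automatically in the paper. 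Both arguments share the essential trick of collapsing a countable family of covers (indexed by the $q_k$'s, or by the pairs $(k,n)$) into a single sequence to which the Hurewicz property is applied exactly once.
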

\begin{proof}
 Without loss of generality we shall assume that $O$ is open. Let us write
$Q$ in the form $\{q_n:n\in\w\}$ and set $O_n=\{z\in\mathcal P(\w):\la q_n,z\ra\in O\}\supset
Y$. For every $n$ find a cover $\U_n$ of
$Y$ consisting of clopen subsets of $\mathcal P(\w)$ contained in $O_n$.
Let $\la\U'_k:k\in\w\ra$ be a sequence of open covers of $Y$ such that each $\U_n$
appears in it infinitely often. Applying the Hurewicz property of $Y$
we can find a sequence $\la \V_k:k\in\w\ra$ such that $\V_k\in [\U_k]^{<\w}$
and $Y\subset \bigcup_{k\in\w} Z_k$, where $Z_k=\bigcap_{m\geq k}\cup\V_m$.
Note that each $Z_k$ is compact and $Z_k\subset O_n$ for all $n\in\w $
(because there exists  $m\geq k$ such that $\U'_m=\U_n$,
and then $Z_k\subset\cup\V_m\subset O_n$). Thus $Q\times Y\subset Q\times (\bigcup_{k\in\w}Z_k)\subset O$.
Since $Z_k$ is compact, there exists for every $k$  an open $R_k\supset Q$
such that $R_k\times Z_k\subset O$. Set $R=\bigcap_{k\in\w}R_k$ and note that
$R\supset Q$ and $R\times Y\subset R\times\bigcup_{k\in\w}Z_k\subset O$.
\end{proof}

Let $A$ be a countable  set and $x,y\in\w^A$. As usually,
$x\leq^* y$ means  that $\{a\in A: x(a)>y(a)\}$ is finite.
The smallest cardinality of an unbounded with respect to
$\leq^*$ subset of $\w^\w$ is denoted by  $\hot b$.
It is well known that $\w_1<\hot b$ in the Laver model, see \cite{Bla10}
for this fact as well as systematic treatment of cardinal characteristics of
reals.

The second part of Theorem~\ref{main} is a direct consequence of Lemma~\ref{laver}
and the following

\begin{proposition} \label{main1}
 Suppose that $\hot b>\w_1$. Let $Y \subset \mathcal P(\w)$ be a Hurewicz space
and $X\subset \mathcal P(\w)$  weakly concentrated. Then $X\times Y$ is Menger.
\end{proposition}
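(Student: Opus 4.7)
The plan is to produce, for each countable $Q\subseteq X$, a Hurewicz-style $\gamma$-cover selection $\cV_n^Q\subseteq\cU_n$ whose ``$\limsup$'' forms a $G_\delta$-superset of $Q\times Y$. Lemma~\ref{covering_g_delta} will then widen $Q$ to a $G_\delta$-set $R_Q\supseteq Q$ with $R_Q\times Y$ still inside that $G_\delta$; weak concentration of $X$ collapses these $R_Q$'s into a cover of $X$ by $\w_1$ many pieces; and $\hot b>\w_1$ collapses the corresponding $\w_1$ many selections into one.

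After first reducing to countable $\cU_n=\{U_{n,j}:j\in\w\}$ via hereditary Lindel\"ofness of $\mathcal P(\w)^2$, for every $Q\in[X]^\w$ the product $Q\times Y=\bigcup_{q\in Q}\{q\}\times Y$ is a countable union of copies of $Y$, hence Hurewicz. The $\gamma$-cover form of the Hurewicz property then yields finite $\cV_n^Q\subseteq\cU_n$ such that $\{\bigcup\cV_n^Q:n\in\w\}$ is a $\gamma$-cover of $Q\times Y$. Setting
$$O_Q=\bigcap_{k\in\w}\bigcup_{n\geq k}\bigcup\cV_n^Q,$$
I obtain a $G_\delta$-subset of $\mathcal P(\w)^2$ containing $Q\times Y$ (since every $(q,y)\in Q\times Y$ lies in $\bigcup\cV_n^Q$ for cofinitely many $n$), and Lemma~\ref{covering_g_delta} supplies a $G_\delta$-set $R_Q\supseteq Q$ in $\mathcal P(\w)$ with $R_Q\times Y\subseteq O_Q$. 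Feeding $Q\mapsto R_Q\cap X$ into the definition of weakly concentrated (with $\mathsf Q=[X]^\w$) produces $\{Q_\alpha:\alpha<\w_1\}$ such that $X\subseteq\bigcup_{\alpha<\w_1}R_{Q_\alpha}$.

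For the final amalgamation I set $g_\alpha(n)=\max\{j:U_{n,j}\in\cV_n^{Q_\alpha}\}$, invoke $\w_1<\hot b$ to pick $g\in\w^\w$ with $g_\alpha\leq^* g$ for all $\alpha$, and declare $\cV_n=\{U_{n,j}:j\leq g(n)\}$. Verifying that $\{\bigcup\cV_n:n\in\w\}$ covers $X\times Y$ then reduces to the observation that for $(x,y)\in X\times Y$, if $x\in R_{Q_\alpha}$ then $(x,y)\in O_{Q_\alpha}$ belongs to $\bigcup\cV_n^{Q_\alpha}$ for \emph{infinitely many} $n$, so any such $n$ in the cofinite set $\{n:g_\alpha(n)\leq g(n)\}$ witnesses $(x,y)\in\bigcup\cV_n$. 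The main obstacle is precisely coordinating a $\w_1$-indexed weak-concentration decomposition of $X$ with an $\w$-indexed Menger selection for $X\times Y$; this is why a Hurewicz ($\gamma$-cover) selection is needed rather than just a Menger one, for only the former guarantees the ``infinitely many $n$'' needed to meet the cofinite dominating set that $\hot b>\w_1$ supplies.
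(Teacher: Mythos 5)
Your proposal is correct and follows essentially the same route as the paper's proof: a Hurewicz ($\gamma$-cover) selection for the countable union $Q\times Y$, the $G_\delta$-set $O_Q=\bigcap_k\bigcup_{n\geq k}\bigcup\mathcal V_n^Q$, Lemma~\ref{covering_g_delta} to fatten $Q$ to $R_Q$, weak concentration to reduce to $\w_1$ many $Q$'s, and $\hot b>\w_1$ to dominate the $\w_1$ many bounding functions. The only cosmetic difference is that the paper refines to clopen covers of $\mathcal P(\w)^2$ where you invoke hereditary Lindel\"ofness to get countable covers; both reductions serve the same purpose.
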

\begin{proof}
Fix a sequence $\la \U_n:n\in\w\ra$ of  covers of $X\times Y$
by clopen subsets of $\mathcal P(\w)^2$. For every
 $Q\in [X]^\w $
fix a sequence
$\la \W^{Q}_n:n\in \w\ra$  such that $\W^{Q}_n\in [\U_n]^{<\w}$ and
$Q\times Y\subset\bigcap_{n\in \w}\bigcup_{m\geq n}\cup\W^{Q}_m$.
Letting $O_{Q}=\bigcap_{n\in \w}\bigcup_{m\geq n}\cup\W^{Q}_m$ and using Lemma~\ref{covering_g_delta},
we can find  a $G_\delta$-subset $R_{Q}\supset Q$  such that
$R_{Q}\times Y\subset O_{Q}$. Since $X$ is weakly concentrated, there exists
$\mathsf Q\subset [X]^\w$ of size $|\mathsf Q|=\w_1$
such that $ R =\bigcup\{ R_{Q}  : Q\in\mathsf Q\} $ contains $X$ as a subset.
Let us fix $x\in X$ and find
 $Q\in\mathsf Q$  such that $x\in R_{Q}$.
Then $\{x\}\times Y \subset R_{Q}\times Y \subset O_Q$.
Therefore for every $\la x,y\ra \in X\times Y$
there exists $Q\in\mathsf Q$ such that $\la x,y\ra\in O_{Q}=\bigcap_{n\in \w}\bigcup_{m\geq n}\cup\W^{Q}_m$.
Let us write $\U_n$ in the form $\{U^n_k:k\in\w\}$
and for every $Q\in\mathsf Q$
 fix   a real $b_Q\in\w^\w$ with the property $\W^Q_n\subset\{U^n_k:k\leq b_Q(n)\}$.
Since $|\mathsf Q|=\w_1<\hot b$, there exists $b\in\w^\w$ such that $b_Q\leq^* b$
for all $Q\in\mathsf Q$.
It follows from the above that $X\times Y\subset\bigcup_{n\in\w}\bigcup_{k\leq b(n)}U^n_k$,
which completes our proof.
\end{proof}

A family $\F\subset[\w]^\w$ is called a \emph{semifilter} if for every
$F\in\F$ and $X\subset \w$, if $|F\setminus X|<\w$
then $X\in\F$.

The proof of  the first part of Theorem~\ref{main}  uses  characterizations
of the properties of Hurewicz and Menger obtained in \cite{Zdo05}.
Let  $u=\la U_n : n\in\omega\ra$ be a sequence of subsets of a set $X$.
For every $x\in X$ let  $I_s(x,u,X)=\{n\in\omega:x\in U_n\}$. If every
$I_s(x,u,X)$ is infinite (the collection of all such sequences $u$ will be denoted
by $\Lambda_s(X)$), then we shall denote by $\mathcal U_s(u,X)$
the smallest semifilter on $\omega$ containing all $I_s(x,u,X)$.
By \cite[Theorem~3]{Zdo05},  a Lindel\"of topological space $X$  is Menger (Hurewicz) if and only if
for every  $u \in\Lambda_s(X)$ consisting of open sets,
  the semifilter $\mathcal U_s(u,X)$ is Menger (Hurewicz).
The proof given there also works if we consider only those
$\la U_n : n\in\omega\ra\in\Lambda_s(X)$ such that all $U_n$'s belong to a given base of
$X$.

\medskip

\noindent\textit{Proof of Theorem~\ref{main}.} \
Suppose that $X,Y$ are Hurewicz spaces such that $X\times Y$ is Lindel\"of
and fix  $w=\la U_n\times V_n :n\in\w\ra\in\Lambda_s(X\times Y)$
consisting of open sets.
Set  $u=\la U_n:n\in\w\ra$,  $v=\la V_n:n\in\w\ra$, and note that
$u\in\Lambda_s(X)$ and $v\in\Lambda_s(Y)$.
It is easy to see
that
$$\U_s(w,X\times Y)=\{A\cap B: A\in \U_s(u,X), B\in \U_s(v,Y)\},$$
and hence $\U_s(w,X\times Y)$ is a continuous image of
$\U_s(u,X)\times \U_s(v,Y)$. By \cite[Theorem~3]{Zdo05} both of
latter ones are Hurewicz, considered as subspaces of $\mathcal
P(\w)$, and hence their product is a Menger space by
Proposition~\ref{main1} and Lemma~\ref{laver}. Thus $\U_s(w,X\times
Y)$ is Menger, being a continuous image of a Menger space. It now
suffices to use \cite[Theorem~3]{Zdo05} again. \hfill $\Box$
\medskip

\noindent \textbf{Acknowledgments.} We would like to thank Marion
Scheepers for fruitful discussions of products of Hurewicz spaces in
various models of Borels's conjecture. We also thank Boaz Tsaban, as
well as the referees,  for many comments and suggestions which
improved our presentation. The first author was partially supported by
the Slovenian Research Agency grant P1-0292. The second author would
like to thank  the Austrian Science Fund FWF (Grants I 1209-N25 and
I 2374-N35)
 for generous support for this research.

\end{document}